\newcommand{\beq}{\begin{equation}}
\newcommand{\eeq}{\end{equation}}
\newcommand{\beqa}{\begin{eqnarray}}
\newcommand{\eeqa}{\end{eqnarray}}
\newcommand{\nn}{\nonumber}
\newcommand{\noi}{\noindent}
\newtheorem{theorem}{Theorem}
\newtheorem{definition}[theorem]{Definition}
\newtheorem{proposition}[theorem]{Proposition}
\begin{document}
\title[Integrable maps and Galois differential algebras]{Integrable maps from Galois differential algebras, Borel transforms and \\   number sequences}

\author{Piergiulio Tempesta}
\address{Departamento de F\'{\i}sica Te\'{o}rica II (M\'{e}todos Matem\'{a}ticos de la f\'isica), Facultad de F\'{\i}sicas, Universidad
Complutense de Madrid, 28040 -- Madrid, Spain}
\keywords{Differential equations, integrable maps, Galois differential algebras, category theory, recurrences, number sequences}

\begin{abstract}
A new class of integrable maps, obtained as lattice versions of polynomial dynamical systems is introduced. These systems are obtained by means of a discretization procedure that preserves several analytic and algebraic properties of a given differential equation, in particular symmetries and integrability \cite{Tempesta2}. Our approach is based on the properties of a suitable Galois differential algebra, that we shall call a Rota algebra. A formulation of the procedure in terms of category theory is proposed. In order to render the lattice dynamics confined, a Borel regularization is also adopted. As a byproduct of the theory, a connection between number sequences and integrability is discussed.
\end{abstract}
\date{May 5, 2013}
\maketitle

\tableofcontents

\vspace{3mm}

\section{Introduction}

The analysis of evolution equations on a discrete background is a very active research area, due to their ubiquitousness both in mathematics and in
theoretical physics. The formulation of quantum physics on a lattice is motivated, for instance,
by the need for regularizing divergencies in field theory \cite{wilson}, \cite{Lee}, \cite{MM} and in several scenarios of
quantum gravity \cite{ash}, \cite{KS}, \cite{thiemann}, \cite{RS}, \cite{GP}, where the discreteness of space--time geometry is assumed.
Discrete versions of nonrelativistic quantum mechanics have also been proposed (see, for instance, \cite{FL}, \cite{Z} and references therein).

Integrable discrete systems have been widely investigated as well. For several respects, they seem to be more
fundamental objects than the continuous ones.  For this reason, many efforts have been devoted to the construction
of discrete systems possessing an algebro--geometric structure reminiscent of that of continuous models. In particular, a challenging issue is to discretize nonlinear ordinary and partial differential equations in such a way that symmetry and integrability properties be preserved
(see the recent monograph \cite{Suris} for the treatment of the Hamiltonian point of view). Among the classical examples of discrete integrable systems we could mention the Toda systems and the Ablowitz--Ladik hierarchies \cite{AL1}--\cite{AL2}.

A rich literature exists on discrete differential geometry and related algebraic aspects \cite{BS1}, \cite{BS2}, \cite{Nov1}, \cite{Nov2}, \cite{Nov3}, \cite{LNP}, \cite{kono1}, \cite{Kuper1}, \cite{LW}, \cite{MV}, \cite{QN}, \cite{Ragnisco}, \cite{santini2}. Frobenius manifolds are relevant in the discussion of generalized Toda systems \cite{Carlet}, \cite{CDZ}. The problem of physically consistent discretizations has been considered in the context of field theories and Hamiltonian gravity, for instance in \cite{GP}, \cite{thooft}, \cite{RS}.

In the paper \cite{Tempesta2}, an approach offering a possible solution to this problem has been proposed. It is based on a technique developed in \cite{Ward}. The approach preserves both Lie symmetries and Lax pairs associated to a given nonlinear PDE. Also, it allows to construct new hierarchies of integrable discrete equations, representing discrete versions of the Gelfand--Dikii hierarchies. In this context, the notion of integrability is equivalent to the requirement that a large class of exact solutions can be analytically constructed. In particular, analytic solutions of continuous systems are converted into exact solutions of the corresponding systems defined on the lattice.

In the present work, by extending the results of \cite{Tempesta2}, \cite{Ward} we focus on the discretization of  vector fields of the form:
\begin{equation}
\frac{d}{dt}z= a_{N}z^{N}+a_{N-1}z^{N-1}+\ldots+a_{1}z+a_{0}, \label{ncont}
\end{equation}
with $N\in\mathbb{N}$, $z=z(t) \in \mathbb{R}$, $a_{0},\ldots, a_{N}\in \mathbb{R}$.

To this aim, a suitable Galois differential algebra, that we shall call a Rota algebra, is introduced. It is a pair $\left(\mathcal{F_{\mathcal{L}}},\mathcal{Q}\right)$, where $(\mathcal{F_L}, +,\cdot, *)$ is an associative algebra of formal power series over a lattice $\mathcal{L}$, endowed with an associative and commutative product "*", and $\mathcal{Q}$ is a delta operator that acts as a \textit{derivation} with respect to this product. The product has been first considered in [43], [6] and applied to the study of integrable models on the lattice. An analogous product has also been proposed, in a different context, in the theory of linear operators acting on spaces of polynomials \cite{ismail}.

The theoretical scheme we propose has a natural formulation in the language of Category Theory. We introduce the category $\mathcal{R} (\mathcal{F},\mathcal{Q})$ of Rota differential algebras and that of polynomial dynamical systems $\mathcal{E}$. The correspondence among continuous and discrete equations is expressed in terms of morphisms of algebras. A functor $F:  \mathcal{R} \rightarrow \mathcal{E}$ is defined. Essentially, it enables to map differential equations into difference equations by preserving the underlying differential structure: the symmetry and integrability properties of a continuous model are naturally inherited by the discrete ones associated and \textit{solutions are mapped into solutions}. This construction is explicitly discussed in the case when the delta operator $\mathcal{Q}$ is a forward difference operator (Theorem \ref{main}). Consequently, the algebraic structure underlying the dynamical models involved becomes transparent.

Other approaches existing in the literature deal with the discretization of differential equations on suitable nonlinear lattices, adapted to the symmetries of the problem (see, for instance, the review \cite{LW}). Instead, in this work we introduce a new family of integrable maps, defined on a lattice of equally spaced points. This kind of lattice is usually more convenient for physical applications, and is especially suitable for the implementation of algorithms of numerical integration.

The theory, being defined on a suitable associative algebra $\mathcal{F}$, is intrinsically a \textit{nonlocal} one: the value of a dynamical observable on the lattice depends on different points of the lattice. This is actually a very common feature observed in the theory of discrete integrable models.
When the dependence is on an infinite number of points, the approach still holds, but in a formal sense.
A regularization procedure should be adopted, in order to give a physical content to this case. This resembles very much what happens in nonlocal theories when dealing with operator product expansions.

However, in several cases, the value of an observable will depend on a \textit{finite} number of lattice points, and the discretization will be said to be \textit{effective}. With this terminology, we mean that the solutions obtained possess the general form
\begin{equation}
z_n= \sum_{k=0}^{n}a_k f(k),
\end{equation}
where $a_k \in \mathbb{R}$ or $\mathbb{C}$ and $f(k)$ is a function of the point $k$ on the lattice.

An interesting byproduct of our approach is a connection between discrete \textit{integrable systems} and \textit{number sequences}. Suppose we have a recurrence relation involving several points on a lattice. This recurrence relation defines in an auxiliary space of variables a discrete dynamical system, that we assume to be integrable. Once we obtain a particular solution of the discrete dynamical system, usually defined in terms of a number sequence,  we are able to construct a solution of the original recurrence relation. A different connection between number theory and difference operators, related to the theory of formal groups, has been established in \cite{Tempesta1}, \cite{Tempesta3}.

The necessary requirement to render the discretization procedure effective is that the points of the lattice  $\mathcal{L}$ we work over should correspond to the set of zeroes of the basic polynomials associated with $\mathcal{Q}$. Therefore, for higher order delta operators, nonlinear lattices should be used. A procedure to compute the basic polynomials has been proposed in \cite{DM}, \cite{LTW1}. In full generality, the problem of the determination of their zeroes in a closed form is essentially open \cite{LT}.



The theory developed here can be easily adapted to the study of physically relevant models, especially nonlinear lattice field theories, as for instance the Liouville theory. It can also be used to construct in a novel way quantum mechanical models on the lattice. The problem of a symmetry preserving discretization of the Schr\"odinger equation has been considered, for instance, in \cite{LTW1}, \cite{LTW2}. Our approach based on the use of Galois differential algebras of Rota type would provide a novel manner to solve the problem, possibly more suitable for the applications, since the correspondence with the continuum case is direct.


Another interesting open problem is to extend the proposed technique to the discretization of isochronous dynamical systems, in the spirit of the general framework proposed in \cite{calogero}.

The structure of the paper is the following. In Section II, an introduction to the algebraic techniques relevant for our discretization procedure is proposed. In Section III, the notions of Rota algebra and the categories $\mathcal{R} (\mathcal{F},\mathcal{Q}) $  and $\mathcal{E}$ are introduced and the main Theorem \ref{main} proved. In Section IV, some explicit examples of dynamical systems obtained according to the previous theory are constructed and some number theoretical aspects related to our approach are presented.

\section{Integrability--preserving discretizations of differential equations: a general framework}

\subsection{Basic definitions}

In this section we review some basic notions concerning the algebraic theory of polynomial sequences and of finite difference operators, much in the spirit of the monographs \cite{Roman}--\cite{Rota}.

Let us denote by $\{p_{n}\left(  x\right)\}_{n\in\mathbb{N}}$, $n=0,1,2,\ldots,$ a sequence of polynomials of order $n$ in a variable $x\in \mathbb{K}$, where $\mathbb{K}$ is a field of characteristic zero..
Let $\mathcal{F}$ denote the algebra of formal power series in $x$ endowed with the operations of sum and multiplication of series, and product of a series by a scalar. An element of $\mathcal{F}$ is expressed by a formal power series of the form
\begin{equation}
f\left(  x\right)  =\sum_{k=0}^{\infty}b_{k}x^{k}\text{.}%
\label{2.2}%
\end{equation}

Let $T$ be the shift operator, whose
action is given by $Tf\left(x\right)=f\left(x+\sigma\right)$, where $\sigma >0$. The operator $T$ can also be represented in terms of a differential operator as $T = e^{\sigma D}$, where $D$ denotes the standard derivative.
\begin{definition}
A linear operator $S$  is said to be {\it shift--invariant} if
commutes with $T$. A shift--invariant operator $\mathcal{Q}$ is called a \textit{delta operator} if $\mathcal{Q}x=const\neq0$.
\end{definition}
\noi We shall denote by $\mathcal{D}$ the set of delta operators.
\begin{definition}\label{def2}
A polynomial sequence $p_{n}\left(x\right)  $ is called a sequence of \textit{basic polynomials} for a delta operator $\mathcal{Q}$
if it satisfies the following conditions:%
\begin{align}
&  1)\text{ \ }p_{0}\left(  x\right)  =1;\notag\\
&  2)\text{ \ }p_{n}\left(
0\right)  =0\ \text{for all }n>0;\text{ \ }\nonumber\\
&  3)\text{ \ }\mathcal{Q} p_{n}\left(  x\right)  =np_{n-1}\left(  x\right).\nonumber
\end{align}
\end{definition}

Every delta operator $\mathcal{Q}$ has a unique sequence of associated basic
polynomials \cite{Rota}. Besides the standard derivative operator $\partial_x$,  a general class of difference operators can be defined by \cite{LTW1}
\begin{equation}
\Delta_{p}=\frac{1}{\sigma}\sum_{k=l}^{m}\alpha_{k}T^{k}\text{,}\quad l\text{,
}m\in\mathbb{Z}\text{,}\mathbb{\quad}l<m\text{,\quad}m-l=p\text{,}\label{2.9}
\end{equation}
where $\sigma$ can be interpreted as a constant lattice spacing and $\alpha_{k}\,$ are constants  such that
\begin{equation}
\sum_{k=l}^{m}\alpha_{k}=0\text{,}\quad\sum_{k=l}^{m}k\alpha_{k}=c\text{.}\label{2.10}
\end{equation}
and $\alpha_m\neq0$, $\alpha_l\neq0$. We choose $c=1$, to reproduce the derivative $\partial_x$ in the continuum limit. A difference operator of the
form (\ref{2.9}), which satisfies the equations (\ref{2.10}), is said to be a delta operator of
order $p$, if it approximates the continuous derivative up to terms of order $\sigma^p$.

As eq. (\ref{2.9}) involves $m-l+1$ constants $\alpha_{k}$, subject to just the two conditions (\ref{2.10}), we can fix all constants
$\alpha_{k}\,\ $ by choosing $m-l-1\,$ further conditions.

\subsection{Associative algebras and nonlocal functional products}

Let $\mathcal Q$ be a delta operator, and $\{p_n(x)\}_{n\in\mathbb{N}}$ be its associated basic sequence. Let us denote by $\mathcal{P}$ the space of polynomials in one variable $x$.  Since the basic polynomials $p_n(x)$ for every $\mathcal Q$ provide a basis of $\mathcal{F}$, any $f\in\mathcal{F}$ can be expanded into a formal series of the form
\begin{equation}
f(x)=\sum_{n=0}^{\infty}a_n p_n(x) \label{exp}.
\end{equation}

Let $\mathcal{L}$ be a lattice of equally spaced points on the real line. We shall denote by $\mathcal{F_L}$ the vector space of the formal power series defined on $\mathcal{L}$. We can endow the space $\mathcal{F}$ (and consequently $\mathcal{F_L}$) with the structure of an algebra, by introducing a suitable product related with the delta operator $\mathcal{Q}$ involved in the discretization considered. Precisely, in \cite{Tempesta2}, the following result has been proved.

\begin{theorem}\label{th1}
For any delta operator $\mathcal Q$, whose associated sequence is $\{p_n(x)\}_{n\in\mathbb{N}}$, the product
$$
*: \mathcal{P} \times \mathcal{P} \longrightarrow \mathbb{R}
$$
defined as
\begin{equation*}
p_n(x)*p_m(x):=p_{n+m}(x)
\end{equation*}
and extended by linearity is associative, commutative and satisfies the Leibnitz rule:
\begin{equation}
\mathcal{Q}\left(f(x)*g(x)\right)=\mathcal{Q}(f(x))*g(x)+f(x)*\mathcal{Q} (g(x)), \label{LR}
\end{equation}
where $f(x), g(x)\in\mathcal{F}$.
\end{theorem}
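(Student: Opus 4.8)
The plan is to reduce everything to the action on the basic polynomials $p_n(x)$, since $*$ is defined on this basis and then extended bilinearly, and since $\mathcal{Q}$ is linear. Thus it suffices to verify each of the three claimed properties on pairs (or triples) of basis elements; the statements for arbitrary $f,g\in\mathcal{F}$ then follow by bilinearity of $*$ and linearity of $\mathcal{Q}$. I would first note that the extension is well defined on formal series: in a product $\left(\sum_i a_i p_i\right)*\left(\sum_j b_j p_j\right)$ the coefficient of each $p_k$ is the finite sum $\sum_{i+j=k} a_i b_j$, so no convergence question arises.

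First I would dispose of commutativity and associativity, which are essentially bookkeeping about the index. Commutativity is immediate, $p_n * p_m = p_{n+m} = p_{m+n} = p_m * p_n$, using only commutativity of addition in $\mathbb{N}$; associativity is equally direct,
\[
(p_n*p_m)*p_k = p_{n+m}*p_k = p_{(n+m)+k}= p_{n+(m+k)} = p_n*(p_m*p_k),
\]
using associativity of addition. Both properties then pass to all of $\mathcal{F}$ by (bi)linearity.

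The substantive step is the Leibniz rule (\ref{LR}), which I would again check on basis elements and then extend. Using property $3)$ of Definition \ref{def2}, namely $\mathcal{Q}p_n = n\,p_{n-1}$, the computation I would carry out is: on the left-hand side
\[
\mathcal{Q}(p_n * p_m) = \mathcal{Q}(p_{n+m}) = (n+m)\,p_{n+m-1},
\]
while on the right-hand side
\[
\mathcal{Q}(p_n)*p_m + p_n*\mathcal{Q}(p_m) = n\,(p_{n-1}*p_m) + m\,(p_n*p_{m-1}) = (n+m)\,p_{n+m-1},
\]
so the two agree. I would treat separately the boundary cases $n=0$ or $m=0$: here $p_0=1$ and $\mathcal{Q}p_0=0$ (consistent with property $3)$ at $n=0$), and both sides of (\ref{LR}) collapse to $\mathcal{Q}(p_m)=m\,p_{m-1}$, so the identity persists.

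I expect no genuine obstacle: the whole content is that the assignment $p_n \mapsto n$ converts the $*$-product into ordinary addition of exponents, so that $\mathcal{Q}$ acts exactly as the derivation $x^n \mapsto n\,x^{n-1}$ does for ordinary multiplication, and property $3)$ is precisely the derivation rule on the basis. The only points demanding care are the bookkeeping of the boundary index $n=0$ and the verification that the bilinear extension of both $*$ and the identity (\ref{LR}) from the basis to arbitrary $f,g\in\mathcal{F}$ is legitimate, which reduces to the finiteness of the coefficient sums noted above.
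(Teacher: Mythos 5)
Your proof is correct, and it is essentially the canonical argument: the paper itself does not prove Theorem \ref{th1} internally (it is quoted from \cite{Tempesta2}), and your basis-level verification --- commutativity and associativity from addition of indices, the Leibniz rule from $\mathcal{Q}p_n = n\,p_{n-1}$ with the boundary case $p_0=1$, $\mathcal{Q}p_0=0$ handled separately, plus the finiteness-of-coefficient-sums remark that legitimizes the bilinear extension to formal series --- is exactly the standard proof given in that reference. Nothing is missing.
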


In the following, with a slight abuse of notation we will use the symbol "$*_{\mathcal{Q}}$" whenever we wish to emphasize the dependence of the "*" product on the choice of $\mathcal{Q}$.

One of the main features of the $*$ product is the fact that it is \textit{nonlocal}. Indeed, the product $f*g$ on a lattice $\mathcal{L}$ does depend on the values of $f$ and $g$ at distinct points of the lattice, with the exception of the standard point--wise product of functions on $\mathbb{R}$, that corresponds to the choice $\mathcal{Q}=\partial_x$.
The simplest version of this product, i.e. that associated with the forward difference operator $\Delta$, has been proposed in \cite{Ward}.

In general, the spaces $\mathcal{F}$ are infinite--dimensional. Nevertheless, the expansion (\ref{exp}) can truncate in some cases, that are of special interest for the applications.

\section{Category theory, differential equations and their discretization}

The discretization approach we wish to propose can be formulated in a natural way in the context of Category Theory. To this aim, we need to propose several new definitions.

\subsection{Rota algebras}
We introduce first the notion of a Rota algebra, as the natural Galois differential algebra over which the discretization procedure we are interested in is carried out. To motivate this definition, crucial for the subsequent discussion, we shall recall first that of Rota correspondence, that has been discussed extensively in the literature \cite{Rota}, \cite{LTW1}--\cite{LTW2}, \cite{DM}, \cite{LNO}. We will formulate it in the following way.

\begin{definition} \label{Rota}
We shall call the correspondence expressed by the following diagram
\begin{center}
\begin{eqnarray}\label{Rotadiag}
\nn \hspace{-16mm}  &\partial&  \mathop{\kern0pt \longrightarrow }\limits_{}^{\psi}\hspace{4mm} \mathcal{Q} \\
\nn &i\downarrow&  \hspace{9mm} i \downarrow \\
\qquad  &\{x^{n}\}_{n}&\mathop{\kern0pt \longrightarrow }\limits_{}^{\phi} \hspace{2mm} \{p_{n}(x)\}_{n}
\end{eqnarray}
\end{center}
the Rota (or umbral) correspondence. Here $i$ is the isomorphism between a delta operator $\mathcal{Q}$ and its associated basic sequence $\{p_n(x)\}_{n\in\mathbb{N}}$,  $\psi:\mathcal{D}\rightarrow\mathcal{D}$ and $\phi:\mathcal{P}\rightarrow\mathcal{P}$ are applications which transform delta operators into delta operators and basic sequences into basic sequences respectively.
\end{definition}

By means of the Rota correspondence we can associate with a linear differential equation (defined in the usual algebra of $\mathcal{C^{\infty}}$ or real analytic functions, endowed with the point--wise multiplication of functions) a linear difference equation in such a way that several algebraic properties are preserved. The application of this correspondence to the Schr\"odinger equation, for instance, allows to discretize it in such a way that integrals of motion are conserved \cite{LTW2}.

Nevertheless, in order to treat the more general case of \textit{nonlinear equations}, one needs to define their discrete versions on Rota algebras. Consequently, we shall formulate the Rota correspondence in the framework of category theory.

As an immediate consequence of Theorem \ref{th1}, we have the following result.
\begin{proposition}
For any choice of  $\mathcal{Q}$, the space $(\mathcal{F}, +,\cdot, *_{\mathcal{Q}})$ endowed with the sum of series, the multiplication by a scalar and the $*$ product, defined as in Theorem \ref{th1}, is an associative algebra.
\end{proposition}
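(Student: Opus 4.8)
The plan is to verify directly the three groups of axioms defining an associative algebra over $\mathbb{R}$: that $(\mathcal{F},+,\cdot)$ is a vector space, that the product $*_{\mathcal{Q}}$ is $\mathbb{R}$-bilinear, and that it is associative. The first is nothing but the standard structure of the space of formal power series, which is already in place, and associativity on the distinguished basis $\{p_n(x)\}_{n\in\mathbb{N}}$ is exactly what Theorem \ref{th1} provides. Thus the only genuine content is to promote these facts from the polynomial level, on which Theorem \ref{th1} operates, to the whole of $\mathcal{F}$, and to record bilinearity.

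First I would use the basic-sequence expansion (\ref{exp}) to write arbitrary elements $f = \sum_{n\ge 0} a_n p_n(x)$ and $g = \sum_{m\ge 0} b_m p_m(x)$, and set
\begin{equation}
f * g := \sum_{n,m\ge 0} a_n b_m\, p_n(x)*p_m(x) = \sum_{n,m\ge 0} a_n b_m\, p_{n+m}(x).
\end{equation}
The first thing to check is that this prescription genuinely defines an element of $\mathcal{F}$. Regrouping in the basis $\{p_k\}$, the coefficient of $p_k(x)$ equals $\sum_{n+m=k} a_n b_m = \sum_{j=0}^{k} a_j b_{k-j}$, a \emph{finite} sum for each $k$. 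Hence $f*g$ is a well-defined formal series, and no question of convergence arises. This step is the crux of the argument: it is precisely the grading relation $p_n * p_m = p_{n+m}$, which raises the degree additively, that confines each output coefficient to a finite convolution and so allows the product of Theorem \ref{th1} to be extended from $\mathcal{P}$ to all of $\mathcal{F}$.

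Bilinearity is then immediate, since the coefficient formula $\sum_{j} a_j b_{k-j}$ is manifestly linear in each of the sequences $(a_j)$ and $(b_j)$ and compatible with multiplication by scalars. For associativity I would first verify it on basis elements, where it is purely combinatorial: $(p_a * p_b)*p_c = p_{a+b}*p_c = p_{a+b+c} = p_a * p_{b+c} = p_a*(p_b*p_c)$, which is the content of Theorem \ref{th1}. By the bilinear extension just established, together with the finiteness of the coefficient sums, this identity propagates to arbitrary $f,g,h\in\mathcal{F}$: both $(f*g)*h$ and $f*(g*h)$ collect to $\sum a_n b_m c_{\ell}\, p_{n+m+\ell}$, with each coefficient of $p_k$ the finite sum $\sum_{n+m+\ell=k} a_n b_m c_{\ell}$.

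Consequently, the proof carries no real obstacle beyond the well-definedness check of the second paragraph; once the finiteness of each coefficient is in hand, the associative-algebra axioms are inherited verbatim from Theorem \ref{th1}, which justifies the claim that the proposition is an immediate consequence of that theorem.
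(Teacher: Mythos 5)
Your proposal is correct and follows the same route as the paper, which simply records the proposition as an immediate consequence of Theorem \ref{th1}. The one detail you add beyond what the paper makes explicit --- that the grading $p_n * p_m = p_{n+m}$ forces each coefficient of $f*g$ to be a finite convolution, so the product extends from $\mathcal{P}$ to all of $\mathcal{F}$ without convergence issues --- is exactly the right justification for that immediacy.
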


\noi Therefore we can propose the following definition.

\begin{definition}
A Rota algebra is a Galois differential algebra $(\mathcal{F}, \mathcal{Q})$, where $(\mathcal{F}, +, \cdot, *_{\mathcal{Q}})$ is an associative algebra of formal power series, $\mathcal{Q}$ is a delta operator and $*$ is the composition law, defined as in Theorem \ref{th1}, such that $\mathcal{Q}$ acts as a derivation on $\mathcal{F}$:
\begin{equation}
i) \quad \mathcal{Q}(a+b)=\mathcal{Q}(a)+\mathcal{Q}(b),
\end{equation}
\begin{equation}
ii) \quad \mathcal{Q}(a*b)= \mathcal{Q}(a)*b+a*\mathcal{Q}(b).
\end{equation}
\end{definition}
\begin{proposition}
Given a delta operator $\mathcal{Q}$, there exists  a unique Rota algebra $(\mathcal{F}, \mathcal{Q})$ associated with it.
\end{proposition}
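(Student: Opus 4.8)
The plan is to read off \emph{existence} from the results already assembled and to treat \emph{uniqueness} as the only point requiring an argument. For existence, I would start from the given delta operator $\mathcal{Q}$. By the cited result of Rota, $\mathcal{Q}$ possesses a unique associated basic sequence $\{p_n(x)\}_{n\in\mathbb{N}}$, and these polynomials form a basis of $\mathcal{F}$. I would then invoke Theorem \ref{th1} to equip $\mathcal{F}$ with the product $*_{\mathcal{Q}}$ determined by $p_n*p_m=p_{n+m}$; the immediately preceding proposition guarantees that $(\mathcal{F},+,\cdot,*_{\mathcal{Q}})$ is an associative algebra. Condition (i) of the derivation axioms is just the linearity of $\mathcal{Q}$, which holds because a delta operator is by definition a (shift--invariant) linear operator, while condition (ii) is precisely the Leibniz rule \eqref{LR} established in Theorem \ref{th1}. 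Hence the pair $(\mathcal{F},\mathcal{Q})$ satisfies every clause of the definition of a Rota algebra, which proves existence.

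For uniqueness, the subtle point is to identify what data can possibly vary. The underlying vector space $\mathcal{F}$ is fixed once and for all, and $\mathcal{Q}$ is the given operator, so the only ingredient to be determined is the product. Since a Rota algebra carries, by definition, the product of Theorem \ref{th1} built from the basic sequence of $\mathcal{Q}$, and since that basic sequence is unique, the product $*_{\mathcal{Q}}$ — and therefore the whole structure — is uniquely pinned down.

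To make the statement carry genuine content beyond the wording of the definition, I would additionally show that $*_{\mathcal{Q}}$ is the \emph{only} associative, commutative product on $\mathcal{F}$ admitting $p_0=1$ as unit and having $\mathcal{Q}$ as a derivation, once one requires $\mathcal{F}$ to be generated as an algebra by the degree--one element $p_1$ and normalizes $p_1^{*n}$ to have vanishing constant term for $n>0$. The key computation is to apply the Leibniz rule to the $*$--powers of $p_1$: since $\mathcal{Q}p_1=1\cdot p_0=1$, an induction using $\mathcal{Q}(a*b)=\mathcal{Q}(a)*b+a*\mathcal{Q}(b)$ yields $\mathcal{Q}\bigl(p_1^{*n}\bigr)=n\,p_1^{*(n-1)}$, whence $\mathcal{Q}\bigl(p_1^{*n}-p_n\bigr)=0$. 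Because the kernel of a delta operator consists of the constants, $p_1^{*n}-p_n$ is a constant, fixed to zero by the normalization; thus $p_1^{*n}=p_n$ and $p_n*p_m=p_{n+m}$, excluding any competing product.

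The main obstacle is exactly this kernel/normalization step: the Leibniz rule alone determines the multiplication only modulo the constants annihilated by $\mathcal{Q}$, so the substantive form of uniqueness genuinely needs the extra normalization, whereas the bare uniqueness asserted by the proposition follows at once from the definition. I expect no difficulty in the existence half, where everything reduces to a bookkeeping assembly of Theorem \ref{th1} and the preceding proposition.
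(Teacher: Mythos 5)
Your first two paragraphs reproduce, in substance, exactly the paper's own proof: the paper disposes of the proposition in one sentence, appealing to the uniqueness of the basic sequence $\{p_n(x)\}_{n\in\mathbb{N}}$ and hence of the product $*_{\mathcal{Q}}$, with existence left implicit in Theorem \ref{th1} and the proposition immediately preceding the statement --- precisely your ``bookkeeping assembly.'' Where you genuinely depart from the paper is your third paragraph. The paper never confronts the fact that, because its definition of a Rota algebra already stipulates that $*$ is the product of Theorem \ref{th1}, the uniqueness claim is close to tautological; you do, and your strengthening --- that $*_{\mathcal{Q}}$ is the \emph{only} associative, commutative, unital product for which $\mathcal{Q}$ is a derivation, subject to generation by $p_1$ and the vanishing-at-zero normalization --- is correct and gives the proposition non-trivial content. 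The induction $\mathcal{Q}\bigl(p_1^{*n}\bigr)=n\,p_1^{*(n-1)}$ via the Leibniz rule, combined with the standard umbral-calculus fact that the kernel of a delta operator acting on series expanded in the basis $\{p_n\}$ consists of the constants, pins down $p_1^{*n}=p_n$ and hence $p_n*p_m=p_{n+m}$. One presentational caveat: the step ``$\mathcal{Q}\bigl(p_1^{*n}-p_n\bigr)=0$'' is not immediate from the displayed identity alone; it equals $n\bigl(p_1^{*(n-1)}-p_{n-1}\bigr)$, so it requires the inductive hypothesis $p_1^{*(n-1)}=p_{n-1}$ --- your compressed phrasing hides this nested induction, but the argument is sound. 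In short: for the claim as stated, your proof coincides with the paper's; your additional characterization is a correct refinement that the paper does not contain, and it buys a version of uniqueness that does not simply restate the definition.
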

\begin{proof}
It follows from the uniqueness both of the "$*_{\mathcal{Q}}$" product associated with $\mathcal{Q}$ and of the sequence of polynomials $\{p_n(x)\}_{n\in\mathbb{N}}$, that ensures that, once $\mathcal{Q}$ is assigned, the algebra $(\mathcal{F}, +, \cdot, *_{\mathcal{Q}})$ is uniquely determined.
\end{proof}

\subsection{The Rota category}
We introduce a subcategory of the well--known category of associative algebras \cite{Mac Lane}: the \textit{Rota category}.
\begin{definition}
The Rota category, denoted by $\mathcal{R}(\mathcal{F}, \mathcal{Q})$ is the collection of all Rota algebras $(\mathcal{F}, \mathcal{Q})$, with morphisms defined by
\begin{equation*}
\mu_{\mathcal{Q},\mathcal{Q}'}: \mathcal{R}(\mathcal{F}, \mathcal{Q}) \longrightarrow \mathcal{R}(\mathcal{F}, \mathcal{Q})
\end{equation*}
\begin{equation}
(\mathcal{F}, +, \cdot, *_{\mathcal Q})\longrightarrow (\mathcal{F'}, +, \cdot, *_{\mathcal{Q'}})
\end{equation}
which are closed under composition.
\end{definition}
The action of the morphism $\mu_{\mathcal{Q},\mathcal{Q}'}$ on formal power series is defined by
\begin{equation}
\sum_{n} a_{n} p_{n} (x) \longrightarrow \sum_m a_{m} q_{m}(x), \label{morphism}
\end{equation}
where $\{p_{n}(x)\}_{n\in\mathbb{N}}$ and $\{q_{m}(x)\}_{m\in\mathbb{N}}$ are the basic sequences associated with $\mathcal{Q}$ and $\mathcal{Q}'$ respectively. The property of closure under composition is trivial.

We also introduce the category of \textit{polynomial dynamical systems}.

\begin{definition}
For any given choice of the coefficients $a_{0},\ldots,a_{N}\in\mathbb{R}$, the category $\mathcal{E}=\mathcal{E}\left(\{a_{0},\ldots, a_{N}\} \right)$ of polynomial dynamical systems is the collection of all equations of the form
\beq
eq(\mathcal{Q},z,*):= \mathcal{Q} z- a_{N}z^{*N}-a_{N-1}z^{*N-1}-\ldots-a_{1}z-a_{0}=0, \label{eq}
\eeq
where $z^{*N}:=\underbrace{z*\ldots *z}_{N-times}$ and $N\in\mathbb{N}$. The set of correspondences
\beq
u_{\mathcal{Q},\mathcal{Q}'}: \mathcal{E} \longrightarrow \mathcal{E},
\eeq
\beq
eq(\mathcal{Q},z,*) \longrightarrow eq(\mathcal{Q}',z,*') \label{morfeq},
\eeq
defines the class of morphisms of the category.
\end{definition}
\noi The closure of the morphisms $u_{\mathcal{Q},\mathcal{Q}'}$ under composition is easily verified. As a consequence of the previous construction, we can define a functor relating the categories of Rota differential algebras and abstract dynamical systems.
\begin{theorem} \label{functor}
The application
\[
F: \mathcal{R}(\mathcal{F},\mathcal{Q}) \longrightarrow \mathcal{E},
\]
\beq
(\mathcal{F}+, \cdot, *_{\mathcal{Q}}) \longrightarrow eq(\mathcal{Q},z,*),
\eeq
\beq
\mu_{\mathcal{Q},\mathcal{Q}'} \longrightarrow u_{\mathcal{Q},\mathcal{Q}'},
\eeq
is a covariant functor.
\end{theorem}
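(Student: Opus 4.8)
The plan is to verify directly the three defining axioms of a covariant functor: that $F$ is well defined on objects, that it sends each morphism of $\mathcal{R}(\mathcal{F},\mathcal{Q})$ to a morphism of $\mathcal{E}$ with matching source and target, and that it preserves identities and composition in the covariant (order-preserving) sense. Since both the object assignment and the morphism assignment are given explicitly in the statement, the work consists in checking that these assignments are single-valued and compatible with the categorical structure.

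First I would settle well-definedness on objects. By the preceding Proposition, a delta operator $\mathcal{Q}$ determines a unique Rota algebra $(\mathcal{F},+,\cdot,*_{\mathcal{Q}})$, and conversely every object of $\mathcal{R}(\mathcal{F},\mathcal{Q})$ is of this form. Because the coefficients $a_{0},\ldots,a_{N}$ are held fixed throughout, the equation $eq(\mathcal{Q},z,*)$ of \eqref{eq} is uniquely determined by $\mathcal{Q}$. Hence the assignment $(\mathcal{F},+,\cdot,*_{\mathcal{Q}}) \mapsto eq(\mathcal{Q},z,*)$ is a single-valued map from objects of $\mathcal{R}$ to objects of $\mathcal{E}$.

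Next I would check the morphism level. A morphism $\mu_{\mathcal{Q},\mathcal{Q}'}$ has source $(\mathcal{F},+,\cdot,*_{\mathcal{Q}})$ and target $(\mathcal{F}',+,\cdot,*_{\mathcal{Q}'})$, acting on series by sending the basic sequence $\{p_{n}(x)\}$ to $\{q_{m}(x)\}$ as in \eqref{morphism}. I must confirm that its image $u_{\mathcal{Q},\mathcal{Q}'}$ is a legitimate morphism of $\mathcal{E}$ from $F$ of the source to $F$ of the target, i.e. from $eq(\mathcal{Q},z,*)$ to $eq(\mathcal{Q}',z,*')$, which is precisely the correspondence \eqref{morfeq}. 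This is the step carrying the real content: one has to see that transporting the equation along $\mu_{\mathcal{Q},\mathcal{Q}'}$ replaces $\mathcal{Q}$ by $\mathcal{Q}'$ and $*_{\mathcal{Q}}$ by $*_{\mathcal{Q}'}$ consistently, so that the nonlinear powers $z^{*N}$ are mapped to $z^{*'N}$ and the derivation property survives. Here I would invoke Theorem \ref{th1}: since $\mu_{\mathcal{Q},\mathcal{Q}'}$ carries basic sequence to basic sequence and the Leibniz rule \eqref{LR} holds for each delta operator, the defining shape of \eqref{eq} is preserved, so that $u_{\mathcal{Q},\mathcal{Q}'}$ indeed lands in $\mathrm{Hom}(eq(\mathcal{Q},z,*),\,eq(\mathcal{Q}',z,*'))$.

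Finally I would verify functoriality proper. Both morphism families are indexed by ordered pairs of delta operators and compose by concatenation of the indices, so $\mu_{\mathcal{Q}',\mathcal{Q}''}\circ\mu_{\mathcal{Q},\mathcal{Q}'}=\mu_{\mathcal{Q},\mathcal{Q}''}$ and likewise $u_{\mathcal{Q}',\mathcal{Q}''}\circ u_{\mathcal{Q},\mathcal{Q}'}=u_{\mathcal{Q},\mathcal{Q}''}$. Applying $F$ to the left-hand composite then gives $u_{\mathcal{Q},\mathcal{Q}''}=u_{\mathcal{Q}',\mathcal{Q}''}\circ u_{\mathcal{Q},\mathcal{Q}'}=F(\mu_{\mathcal{Q}',\mathcal{Q}''})\circ F(\mu_{\mathcal{Q},\mathcal{Q}'})$, which is the covariant composition law with order preserved. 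Taking $\mathcal{Q}=\mathcal{Q}'$ shows that $\mu_{\mathcal{Q},\mathcal{Q}}$, the identity on $(\mathcal{F},\mathcal{Q})$, is sent to $u_{\mathcal{Q},\mathcal{Q}}$, the identity on $eq(\mathcal{Q},z,*)$, so identities are preserved. The main obstacle is the morphism-compatibility step of the previous paragraph: everything else is bookkeeping about how the index pairs concatenate, but verifying that $u_{\mathcal{Q},\mathcal{Q}'}$ genuinely transports the whole equation, including the nonlinear $*$-powers, to the corresponding primed equation is where the algebraic structure of the Rota algebra, and in particular the derivation property of Theorem \ref{th1}, must actually be used.
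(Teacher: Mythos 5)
Your proposal is correct and follows essentially the same route as the paper: the paper's own proof is simply a terse ``direct verification'' that $F$ preserves composition, $F(\mu_{\mathcal{Q}'',\mathcal{Q}'}\circ \mu_{\mathcal{Q}',\mathcal{Q}})=F(\mu_{\mathcal{Q}'',\mathcal{Q}'})\circ F(\mu_{\mathcal{Q}',\mathcal{Q}})$, together with preservation of the identity morphism $id_{\mathcal{Q}}:=\mu_{\mathcal{Q},\mathcal{Q}}$, which is exactly your final paragraph with the index-concatenation bookkeeping spelled out. The additional steps you flag (well-definedness on objects via the uniqueness proposition, and compatibility of $u_{\mathcal{Q},\mathcal{Q}'}$ with the nonlinear $*$-powers via Theorem \ref{th1}) are not addressed in the paper's proof at all, since there $u_{\mathcal{Q},\mathcal{Q}'}$ is a morphism of $\mathcal{E}$ by definition of \eqref{morfeq}; your elaboration is sound but constitutes extra care rather than a different argument.
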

\begin{proof}
A direct verification shows that $F$ preserves the composition of morphisms:
\[
F(\mu_{\mathcal{Q}'',\mathcal{Q}'}\circ \mu_{\mathcal{Q}',\mathcal{Q}})=F(\mu_{\mathcal{Q}'',\mathcal{Q}'})\circ F(\mu_{\mathcal{Q}',\mathcal{Q}}).
\]
If we denote by $id_{\mathcal{Q}}:=\mu_{\mathcal{Q},\mathcal{Q}}$ the identity morphism, we also have
\[
F(id_{\mathcal{Q}}(\mathcal{A}))=id_{\mathcal{Q}}(F(\mathcal{A})),
\]
where $\mathcal{A}\in \mathcal{R}(\mathcal{F}, \mathcal{Q})$.
\end{proof}
This functor encodes all the main features of the discretization procedure we propose. It generalizes considerably the Rota correspondence (\ref{Rotadiag}).

\subsection{Main theorem}
The following Theorem \ref{main} represents the main result of this section. As a particular instance of the action of the functor $F$, it guarantees the discretization of a differential equation into a difference equation belonging to the same category $\mathcal{E}$, with the property that their solutions are mapped into each other.

\begin{theorem} \label{main}
\noindent Given a dynamical system of the form
\begin{equation}
\frac{d}{dt}z= a_{N}z^{N}+a_{N-1}z^{N-1}+\ldots+a_{1}z+a_{0}, \label{ncont2}
\end{equation}
where $z:\mathbb{R}_{+} \cup \{0\}\rightarrow\mathbb{R}$ is a $\mathcal{C}^{\infty}$ function, $a_{0},\ldots, a_{N} \in \mathbb{R}$, let
\begin{equation}
\mathcal{Q} z = a_{N}z^{*N}+a_{N-1}z^{*(N-1)}+\ldots+a_{1}z+a_{0} \label{abst}
\end{equation}
be the abstract equation associated with it, defined in the Rota algebra $(\mathcal{F}, \mathcal{Q})$ where $z\in (\mathcal{F}, +, \cdot, *_{\mathcal{Q}})$.
Let
\beq
z=\sum_{k=0}^{\infty} b_k t^k
\eeq
be a real solution of \eqref{ncont2} in the ring of formal power series in $t \in \mathbb{R}_{+} \cup \{0\}$. Then the equation

\begin{eqnarray}
\nonumber &z_{n+1}-z_{n}&= a_{N} \sideset{}{'}\sum_{k_1,\ldots,k_{N}=0}^{n} \frac{(-1)^{k_1+\cdots + k_{N}+n}}{k_1!\ldots k_{N}!}z_{k_1}z_{k_2}\cdots z_{k_N}\frac{n! (N-1)^{n-k_{1}-k_{2}-\cdots-k_{N}}}{(n-k_1-k_2-\ldots-k_{N})!}  \\
\nonumber &+& a_{N-1}\sideset{}{'}\sum_{k_1,\ldots,k_{N-1}=0}^{n}\frac{(-1)^{k_1+\cdots + k_{N-1}+n}}{k_1!\ldots k_{N-1}!}z_{k_1}z_{k_2}\cdots z_{k_N-1}\frac{n! (N-2)^{n-k_{1}-k_{2}-\cdots-k_{N-1}}}{(n-k_1-k_2-\ldots-k_{N-1})!}\\ \nonumber &+& \cdots +  a_2\sideset{}{'}\sum_{k_1=0}^{n}\sideset{}{'}\sum_{k_2=0}^{n}\frac{(-1)^{k_1+k_2+n}}{k_1!k_2!}z_{k_1}z_{k_2}\frac{n!}{(n-k_1-k_2)!} +a_1 z_n+a_0, \\ \noindent \label{Nmap}
\end{eqnarray}
that represents eq. \eqref{abst} for $\mathcal{Q}=\Delta^{+}$ on a regular lattice of points, indexed by the discrete variable $n$, admits as a solution the series
\beq
z_{n}=\sum_{k=0}^{n} b_k \frac{n!}{(n-k)!}. \label{part}
\eeq
Here we denote by $\sideset{}{'}\sum$ a sum ranging over all values of the indices $k_{i}$ such that $\sum_{i} k_{i}\leq n$.
\end{theorem}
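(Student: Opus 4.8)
The plan is to reduce the whole statement to a single identity between exponential generating functions, after first using the Rota correspondence to read off the candidate solution. First I would record that the basic sequence of $\Delta^{+}$ (unit spacing) is the falling factorial $p_{k}(x)=x(x-1)\cdots(x-k+1)$, so that $p_{k}(n)=n!/(n-k)!$ and $p_{k}(n)=0$ for $k>n$. The morphism of \eqref{morphism} relabels coefficients from the source basis $\{x^{k}\}$ of $\partial_{x}$ into the target basis $\{p_{k}(x)\}$ of $\Delta^{+}$; it therefore carries the formal solution $z(t)=\sum_{k}b_{k}t^{k}$ to $Z(x)=\sum_{k}b_{k}p_{k}(x)$, whose value at the lattice point $n$ is exactly $\sum_{k=0}^{n}b_{k}\,n!/(n-k)!$, i.e.\ the series \eqref{part} (the sum truncates because $p_{k}(n)$ vanishes once $k>n$). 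Since this morphism intertwines $\partial_{t}$ with $\Delta^{+}$ and the pointwise product with $*_{\Delta^{+}}$ (Theorem~\ref{th1} together with the functoriality of Theorem~\ref{functor}), $Z$ solves the abstract equation \eqref{abst}; the remaining task is to show that writing \eqref{abst} out at the lattice points produces precisely \eqref{Nmap}, and to verify the solution directly in the ring of formal power series.

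For the direct verification I would introduce the exponential generating function $W(t)=\sum_{n}z_{n}\,t^{n}/n!$. The relation \eqref{part} reads $z_{n}/n!=\sum_{k=0}^{n}b_{k}/(n-k)!$, which is a Cauchy product and says exactly that $W(t)=e^{t}z(t)$, equivalently $z(t)=e^{-t}W(t)$. This single identity is the engine of the argument.

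Next I would compute the generating function of each monomial appearing on the right of \eqref{Nmap}. Expanding $z^{*m}=\sum b_{k_{1}}\cdots b_{k_{m}}\,p_{k_{1}+\cdots+k_{m}}$ by the rule $p_{a}*p_{b}=p_{a+b}$ and evaluating at $n$ shows that the coefficient sequence multiplying $a_{m}$ in \eqref{Nmap} is $(z^{*m})(n)$, and that $\sum_{n}(z^{*m})(n)\,t^{n}/n!=z(t)^{m}e^{t}=e^{-(m-1)t}W(t)^{m}$. Here the signs $(-1)^{k_{1}+\cdots+k_{m}+n}=(-1)^{n-K}$ (with $K=k_{1}+\cdots+k_{m}$) and the powers $(m-1)^{n-K}$ recorded in \eqref{Nmap} are precisely the coefficients generated by the factor $e^{-(m-1)t}$; recognising this is the crux of the computation. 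Summing over $m$, the right-hand side of \eqref{Nmap} has generating function $e^{t}\sum_{m}a_{m}\bigl(e^{-t}W\bigr)^{m}=e^{t}\sum_{m}a_{m}z^{m}$, and at this point I would invoke the hypothesis that $z$ solves \eqref{ncont2}, so that $\sum_{m}a_{m}z^{m}=z'$ and the right-hand generating function equals $e^{t}z'(t)$.

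Finally I would treat the left-hand side: since $\Delta^{+}z$ at $n$ equals $z_{n+1}-z_{n}$, its generating function is $W'(t)-W(t)$, and from $W=e^{t}z$ one computes $W'-W=e^{t}z'$. The two generating functions coincide, so \eqref{Nmap} holds coefficient by coefficient, completing the proof. I expect the main obstacle to be the middle step, namely establishing the closed form $\sum_{n}(z^{*m})(n)\,t^{n}/n!=e^{-(m-1)t}W(t)^{m}$ and matching it term by term against the constrained multi-index sums of \eqref{Nmap}; once the relation $W=e^{t}z$ is in hand, everything else is routine bookkeeping, including the degenerate cases $m=1$ (giving $a_{1}z_{n}$) and $m=0$ (giving the constant $a_{0}$).
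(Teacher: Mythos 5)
Your proposal is correct, and its core verification runs along a genuinely different line from the paper's. The paper proceeds in two stages: it first derives \eqref{Nmap} from \eqref{abst} by a direct combinatorial computation --- writing the auxiliary coefficients $\widehat{z}_{l}$ via the inverse interpolating transform, introducing the kernel $\mathrm{K}_{n,k_{1},k_{2}}$ and summing it in closed form --- but carries this out explicitly only for the quadratic term $z^{*2}$, disposing of the higher powers with ``by generalizing the previous reasoning''; it then obtains the solution \eqref{part} abstractly, by appealing to the functor of Theorem \ref{functor}: the morphism $\mu_{\partial,\Delta^{+}}$ sends $\sum_{k}b_{k}t^{k}$ to $\sum_{k}b_{k}p_{k}(n)$, hence solutions to solutions, and the latter sum truncates on the lattice. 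You instead verify everything analytically from the single exponential generating function identity $W(t)=\sum_{n}z_{n}t^{n}/n!=e^{t}z(t)$: the constrained multi-index sums of \eqref{Nmap}, signs $(-1)^{n-K}$ and weights $(m-1)^{n-K}$ included, have generating function $e^{-(m-1)t}W(t)^{m}=e^{t}z(t)^{m}$, so summing over $m$ and using \eqref{ncont2} gives $e^{t}z'(t)$ for the right-hand side, while the left-hand side gives $W'-W=e^{t}z'$. The middle step you flagged as the crux is indeed routine: setting $d_{K}=\sum_{k_{1}+\cdots+k_{m}=K}z_{k_{1}}\cdots z_{k_{m}}/(k_{1}!\cdots k_{m}!)$, one has $\sum_{n}\bigl(\sum_{K\le n}d_{K}(-1)^{n-K}(m-1)^{n-K}\,n!/(n-K)!\bigr)t^{n}/n!=e^{-(m-1)t}W(t)^{m}$ by a Cauchy product, and this matches $e^{t}z^{m}$ exactly when $W=e^{t}z$, which is the interpolation relation \eqref{part}; the degenerate cases $m=0,1$ fit the same formula. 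As for what each approach buys: yours is uniform in $N$ (the paper's kernel computation is explicit only for $N=2$), self-contained (it does not lean on the functoriality of Theorem \ref{functor}, whose own proof is only sketched), and it explains structurally where the factors $(N-1)^{n-K}$ in \eqref{Nmap} come from, namely as Taylor coefficients of $e^{-(N-1)t}$; the paper's categorical route, though less explicit, emphasizes the stronger structural claim that solutions transfer under \emph{any} morphism $\mu_{\mathcal{Q},\mathcal{Q}'}$, not only $\mu_{\partial,\Delta^{+}}$, which is the organizing theme of the paper. It is also worth remarking that your identity $W=e^{t}z$ is itself a Borel-type transform, anticipating the regularization the paper introduces in Section 4.
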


\begin{proof}
The proof entails several steps. First, we prove that the equation \eqref{abst} converts into the discrete map \eqref{Nmap} for $\mathcal{Q}=\Delta^{+}$. To this aim, we shall define the function $z$ on an equally spaced lattice of points $\mathcal{L}$, indexed by $n\in\mathbb{N}$.. Also, we introduce an auxiliary space of variables, by means of a finite transformation on the lattice.
\begin{definition}
Let $p_n(t)$ be a basic sequence for a given delta operator $\mathcal{Q}$. We call the transformation
\begin{equation}
z(t)= \sum_{k=0}^{\infty}\widehat{z}_{k} p_{k} (t) \label{interpol}
\end{equation}
a \textit{discrete interpolating transformation} with coefficients $\widehat{z}_{k} \in\mathbb{R}$.
\end{definition}
\noindent In the specific case when $p_{k}(t)$ are the lower factorial polynomials, the discrete transform (\ref{interpol}) is \textit{finite}. Indeed, we have
\begin{equation}
p_{k}(n)= \begin{cases} 0 \qquad\qquad\qquad if \quad n<k, \\
\frac{n!}{(n-k)!}\qquad\qquad if\quad n \geq k. \label{pol}
\end{cases}
\end{equation}
\noindent It is straightforward to prove that
\begin{equation}
z_n=  \sum_{l=0}^{n} \frac{n!}{(n-l)!}\widehat{z}_l
\end{equation}
and, for the \textit{inverse interpolating transform},
\begin{equation}
\widehat{z}_n= \sum_{l=0}^{n} (-1)^{n-l} \frac{1}{  l!(n-l)!}z_l. \label{invtr}
\end{equation}
\noindent To derive the r.h.s. of eq. \eqref{Nmap}, we start computing explicitly the product $z^{*2}=z*z$. We get
\[
(z*z)_n=\sum_{l_1,l_2=0}^{\infty} \widehat{z}_{l_1}\widehat{z}_{l_2}P_{l_1+l_2}(n)=
\]
\[
\sum_{l_1=0}^{n}\sum_{l_2=0}^{n} \sum_{k_1=0}^{l_1} \sum_{k_2=0}^{l_2} (-1)^{l_1-k_1+l_2-k_2}[ \frac{z_{k_1}z_{k_2}}{k_1!(l_1-k_1)!k_2!(l_2-k_2)!}
\]
\[
\cdot \frac{n!}{(n-l_1-l_2)!}]=\sum_{k_1=0}^{n}\sum_{k_2=0}^{n} \frac{(-1)^{k_1+k_2}}{k_1!k_2!}z_{k_1}z_{k_2} \mathrm{K}_{n,{k_1},{k_2}},
\]

\noindent where we have introduced the kernel
\beqa
\mathrm{K}_{n,{k_1},{k_2}}= \sideset{}{'}\sum_{l_1, l_2=0}^{n}(-1)^{l_1+l_2}\frac{1}{(l_1-k_1)!} \cdot \frac{1}{(l_2-k_2)!}\frac{n!}{(n-l_1-l_2)!}. \label{kernel}
\eeqa
This expression, after some algebraic manipulations, reduces to
\begin{equation}
\mathrm{K}_{n,{k_1},{k_2}}= \sum_{l=k_1+k_2}^{n}\frac{(-1)^{l}n!}{(n-l)!}\frac{2^{l-k_1-k_2}}{(l-k_1-k_2)!}.
\end{equation}
Putting $s=l-k_1-k_2$, and summing over $s$, we arrive at the final expression for the kernel (\ref{kernel}):
\begin{equation}
\mathrm{K}_{n,{k_1},{k_2}}= \frac{(-1)^{n}n!}{(n-k_1-k_2)!}, \qquad n>k_{1}+k_{2}.
\end{equation}
By generalizing the previous reasoning, we prove the first statement of Theorem \ref{main}, i.e. that formula \eqref{Nmap} is the discrete analog of eq. \eqref{ncont2} on the regular lattice $t=n$.

The second part of the theorem, concerning the existence of the solution \eqref{part} for eq. \eqref{Nmap}, can be proved as follows. By means of the previous categorical approach, we have shown that eq. \eqref{Nmap} is nothing but the image of eq. \eqref{ncont2} under the action of the morphism $u_{\partial,\Delta^{+}}$ defined in $\mathcal{E}$. The morphism $\mu_{\partial,\Delta^{+}}$ provides the correspondence
\begin{equation}
\sum_{k} b_k t^k \longrightarrow \sum_{k} b_k p_{k}(n).
\end{equation}

By means of the action of the functor $F$, any formal series solution $z$ of eq. \eqref{abst} will correspond to a solution $z(t)$ of eq. \eqref{ncont2} and to a solution $z_n$ of eq. \eqref{Nmap}. In addition, on the lattice $\mathcal{L}$, the sum $\sum_{k} a_k p_{k}(n)$ truncates and converts into the finite sum \eqref{part}.

\end{proof}

\noi \textbf{Comment 1}.
The scheme offered by Theorem \ref{functor} can be used to generalize Theorem \ref{main} to infinitely many other integrable maps, each of them defined in a specific Rota algebra $(\mathcal{F},\mathcal{Q})$.

\section{Applications: recurrences, number sequences and \\ Borel regularization}

In this section, a connection between \textit{integrability} of discrete systems  and \textit{number sequences} will be established. It relies on the fact that a recurrence relation, defined on a suitable space of variables, can be associated with each of the discrete dynamical systems previously introduced. The solutions of these dynamical models are strictly related to those of the recurrences, that are expressed in terms of specific number sequences. For completeness, in the following we will study in detail some particular cases of this construction.

\subsection{A new family of quadratic dynamical systems}

Let $z=z(t):\mathbb{R}_{+} \cup \{0\}\rightarrow\mathbb{R}$ a $\mathcal C^{\infty}$ function. Consider the dynamical system
\begin{equation}
\frac{d}{dt}z=z^{2}. \label{cont}
\end{equation}
By applying Theorem \ref{main}, we discretize this model in such a way that the solutions of its discrete versions conserve the same structure of the solutions of \eqref{cont}. Consequently, we associate with the system (\ref{cont}) an abstract operator equation
\begin{equation}
\mathcal{Q} z =  z^{*2}. \label{syst}
\end{equation}
Here $z\in (\mathcal{F},+, \cdot, *_{\mathcal{Q}})$. In order to get a discrete map, we represent the equation (\ref{syst}) on a specific function algebra. We will choose $\mathcal{Q}=\Delta$. As customarily, we denote by $z_n$ the value of the function $z$ at the point $n$. Eq. (\ref{syst}) becomes on the lattice
\begin{equation}
z_{n+1}-z_{n} =  z^{*2}_n=\sideset{}{'}\sum_{k_1, k_2=0}^{n}\frac{(-1)^{k_1+k_2+n}}{k_1!k_2!}z_{k_1}z_{k_2}\frac{n!}{(n-k_1-k_2)!}, \label{dyn}
\end{equation}
which is a particular instance of eq. \eqref{Nmap}.

This system possesses an interesting \textit{alter ego} in the auxiliary space $\mathcal{A}$ of the variables $\widehat{z}$, defined in terms of a recurrence relation. Indeed, observe that
\begin{equation}
\Delta z_n= \sum_{k=0}^{\infty} \widehat{z}_l l p_{l-1}(n)=\sum_{l=0}^{n}\frac{n!}{(n-l)!}(l+1)\widehat{z}_{l+1}
\end{equation}
and
\begin{equation}
z^{*2}_n=\sum_{l_1,l_2=0}^{\infty}\widehat{z}_{l_1}\widehat{z}_{l_2}P_{l_1+l_2}=\sum_{l=0}^{n}\frac{n!}{(n-l)!}
\sum_{l_1=0}^{l}\widehat{z}_{l_1}\widehat{z}_{l-l_1}.
\end{equation}
We conclude that the following recurrence holds

\begin{equation}
(l+1)\widehat{z}_{l+1}=\sum_{l'=0}^{l}\widehat{z}_{l'}\widehat{z}_{l-l'}.
\end{equation}
This recurrence can be solved by using the ansatz
\begin{equation}
\widehat{z}_l=z_0^{l+1}.
\end{equation}
We deduce the solution
\begin{equation}
z_n=\sum_{l=0}^{n}\frac{n!}{(n-l)!} z_{0}^{l+1}, \label{solution}
\end{equation}

\noindent which can be also expressed in the compact form as
\begin{equation}
z_n=z_0 \cdot {}_{2} F_{0}(1,n;z_0).
\end{equation}
Therefore, the discretization procedure we adopted is \textit{effective}.
Also, it preserves integrability. Indeed, if we write the general solution of (\ref{cont}) in the functional space $(\mathcal{F},+, \cdot, *_{\Delta})$, we get the corresponding solution of the discrete model (\ref{dyn}).

To prove this, observe that the general solution of eq. (\ref{cont}) is provided by
\begin{equation}
z(t)=\frac{z_0}{1-z_0 t}.\label{solcont}
\end{equation}
We get

\begin{equation}
z(t)=z_0+z_0^{2}t+z_0^{3}t^{2}+\ldots \label{expandsol}
\end{equation}
and by virtue of the correspondence $t^n \longrightarrow p_n(t)$ defined by the morphism $\mu_{\partial,\Delta^{+}}$, taking also into account (\ref{pol}), we obtain eq. (\ref{solution}).

\subsection{Borel regularization}

The solution (\ref{solution}) of the discrete dynamical system (\ref{dyn}) diverges as $n\rightarrow \infty$. Instead, the solution (\ref{solcont}) of the continuous model (\ref{cont}) has a stable point at $z(\infty)=0$. In order to construct a system possessing localized dynamics, we introduce a \textit{finite Borel--type regularization} procedure.

\begin{definition}
Given a series $S\in\mathcal{F}$, with $S=\sum_{k=0}^{\infty}b_{k}z^k$, we call finite Borel--type regularization of $S$ the following transformation of the sequence of partial sums $\{S_{n}\}_{n\in \mathbb{N}}$ of $S$, i.e.
\beq
\mathcal{B}(S_{n})= \sum_{k=0}^{n} b_{k} \frac{{z^k}}{n!}.
\eeq

\end{definition}
\noindent Let
\beq
w_{n}=\mathcal{B}(z_{n})=\sum_{l=0}^{n}\frac{1}{(n-l)!} z_{0}^{l+1}.
\eeq
Consequently, the Borel regularized dynamical system associated with (\ref{dyn}) is
\beq
(n+1) w_{n+1}-w_{n}=\sideset{}{'}\sum_{k_{1}, k_{2}=0}^{n} (-1)^{k_{1}+k_{2}+n}w_{k_{1}}w_{k_{2}}\frac{1}{(n-k_1-k_2)!}\label{rnew}.
\eeq
This new dynamical system is still in correspondence with the original model (\ref{cont}). Indeed,  the Borel transform of the series obtained by applying to (\ref{expandsol}) the correspondence $t^n \rightarrow p_n(t)$ is a solution of eq. \eqref{rnew}.

\noindent Let us consider the dynamical system defined by differential equation
\beq
z'(t)=a_2 z(t)^2+ a_1 z(t)+a_0, \qquad a_0,a_1,a_2\in\mathbb{R}. \label{2pol}
\eeq

\noindent By integrating it, we obtain
\begin{equation}
z(t)=\frac{-a_1+\sqrt{\Gamma}\tan{\left\{\frac{1}{2}\sqrt{\Gamma}(t+c_0)\right\}}}{2a_2},
\end{equation}
where $\Gamma=4a_2a_0-a_1^{2}$, and $c_0\in\mathbb{R}$ is fixed by the initial condition.
\noindent By following the procedure described above, we obtain the discrete dynamical system

\begin{equation}
z_{n+1}-z_{n}= a_2 \sideset{}{'}\sum_{k_1, k_{2}=0}^{n}\frac{(-1)^{k_1+k_2+n}}{k_1!k_2!}z_{k_1}z_{k_2}\frac{n!}{(n-k_1-k_2)!}+ a_1 z_n+a_0, \label{newquad}
\end{equation}
with solution
\beq
z_{n}=\sum_{k=0}^{n} \beta_{k}\frac{n!}{(n-k)!},
\eeq
where $\{\beta_{k}\}_{k\in\mathbb{N}}$ is a number sequence, whose generating function is
\beq
\beta_{k}=\left[\frac{1}{k!}\frac{d^k}{dx^k}\frac{\sqrt{\Gamma}\tan{\left\{\frac{1}{2}\sqrt{\Gamma}(x+c_0)\right\}-a_1}}{2a_2}\right]_{\mid_{x=0}}.
\eeq
Its regularized version of Borel--type defines the map
\beq
(n+1) w_{n+1}-w_{n}=a_2\sideset{}{'}\sum_{k_{1}, k_{2}=0}^{n} (-1)^{k_{1}+k_{2}+n}w_{k_{1}}w_{k_{2}}\frac{1}{(n-k_1-k_2)!}+ a_1 w_n+\frac{a_0}{n!}\label{rnew2},
\eeq
with a solution given by the finite Borel--type transform of $z_n$:
\beq
w_n=\sum_{k=0}^{n}  \frac{\beta_{k}}{(n-k)!}.
\eeq

\noi \textbf{Comment 2}.
Besides the Borel regularization adopted in this work, other regularization procedures are possible (for instance, the Mittag--Leffler one). The other procedures  a priori could lead to discrete dynamical systems different than \eqref{rnew} or \eqref{rnew2}.

\subsection{A more general family of dynamical systems and associated recurrences}

\noindent Let us consider the dynamical system
\begin{equation}
\frac{d}{dt}z=a_{N} z^{N}, \qquad N\in\mathbb{N} \qquad a_{N}\in \mathbb{R}. \label{gendds}
\end{equation}
where $z:\mathbb{R}\cup\{0\}\rightarrow\mathbb{R}$ is a $\mathcal{C}^{\infty}$ function. The third system of the hierarchy is the dynamical system
\begin{equation}
\Delta z = a_3 z^{*3}. \label{3syst}
\end{equation}
Its realization in the space $(\mathcal{F},+, \cdot, *_{\Delta^{+}})$ can be obtained in a similar way. Therefore, eq. (\ref{3syst}) becomes on the lattice
\beq
z_{n+1}-z_{n}= a_3 \sideset{}{'}\sum_{k_1,k_2,k_3=0}^{n}\frac{(-1)^{k_1+k_2+k_3+n}}{k_1!k_2!k_3!}z_{k_1}z_{k_2}z_{k_3}\frac{2^{n-k_{1}-k_{2}-k_{3}}n!}{(n-k_1-k_2-k_3)!}. \label{3new}
\eeq
This system, as before, can be written in terms of the transformed variables (\ref{invtr}). It has the form of a recurrence relation
\begin{equation}
(l+1)\widehat{z}_{l+1}=a_3\sum_{\overset{l_1,l_2=0}{l_1+l_2\leq l}}\widehat{z}_{l_1}\widehat{z}_{l_2}\widehat{z}_{l-l_1-l_2}.
\end{equation}
A class of real solutions of eq. \eqref{gendds}, for $N=3$ is provided by
\begin{equation}
z(t)=\frac{1}{\sqrt{2}\sqrt{-a_3t+c_0}},
\end{equation}
where $a_{3} <0$ and $c_0\in\mathbb{R}_{+}$ is an arbitrary constant fixed by the initial condition. Since our discretization preserves integrability, we get the following series solution of the dynamical system \eqref{3new}:
\begin{equation}
z_n=\sum_{k=0}^{n}\frac{\gamma_k}{\sqrt{2}}\frac{n!}{(n-k)!}\frac{ a_3^{k}}{c_0^{(2k+1)/2}}, \qquad n=0,1,2,\ldots
\end{equation}
where the first terms of the sequence $\{\gamma_k\}_{k\in\mathbb{N}}$ are
\begin{equation}
\gamma_0=1,\gamma_1=\frac{1}{2},\gamma_2=\frac{3}{8},\gamma_3=\frac{5}{16}, \nn
\end{equation}
\beq
\gamma_4=\frac{35}{128}, \quad \gamma_5=\frac{63}{256},\quad etc.
\eeq
The generating function of this sequence is given by
\begin{equation}
\gamma_k= \left[\frac{d^k}{dx^k}\frac{1}{\sqrt{1-x}}\right]\mid_{x=0}. \label{seq}
\end{equation}

The procedure can be easily generalized to each value of $N\in \mathbb{N}$.

The Borel regularization approach, described above in the simpler case of quadratic dynamical systems, can be extended in a completely analogous way to the case \eqref{3new} or to the maps arising from more general polynomial vector fields. The number sequence $\{b_k\}_{k\in\mathbb{N}}$ appearing in the explicit solution of the maps and of their Borel--transformed analogs are preserved.

Consider now a general recurrence relation of the form

\beq
(l+1)\widehat{z}_{l+1}=a_{N} \sum_{\overset{l_1,l_2,\cdots,l_N=0}{l_1+\cdots+l_N\leq l}}\widehat{z}_{l_1}\widehat{z}_{l_2} \cdots \widehat{z}_{l_{N}}\widehat{z}_{l-l_1-l_2-\cdots l-N}. \label{recur}
\eeq
In order to find an exact solution of it, for every $N$, we interpret the recurrence (\ref{recur}) as the difference equation defining an abstract dynamical system, in the auxiliary space $\mathcal{A}$.
Then, a particular solution of the recurrence can be obtained by associating with it the discrete map

\beqa
\nn z_{n+1}-z_{n}= a_{N} \sideset{}{'} \sum_{k_1,\ldots,k_N=0}^{n}\frac{(-1)^{k_1+\cdots+k_N+n}}{k_1!\cdots!k_N!}z_{k_1}\cdots \\  \cdot z_{k_N}\frac{n! (N-1)^{n-k_{1}-k_{2}-\cdots-k_{N}}}{(n-k_1-k_2-\cdots k_N)!}, \label{discretemap}
\eeqa
obtained by means of the inverse transform (\ref{invtr}). Then, we come back to the continuous system (\ref{gendds}), and construct a general solution of it.

Once we expand this solution and discretize it on the lattice $\mathcal{L}$ by following the procedure described above, we deduce a particular solution of the map \eqref{discretemap} (which depends on the choice of the initial condition of (\ref{gendds})) in terms of a number sequence; from (\ref{invtr}) we construct a solution of the recurrence (\ref{recur}).

\section*{Acknowledgment}

I would like to thank heartily F. Calogero for his warm hospitality, very helpful advice,  discussions, encouragement and a careful reading of the manuscript.

I am grateful to  R. A. Leo and G. \'Alvarez Galindo for stimulating discussions and a careful reading of the manuscript. Useful discussions with B. Dubrovin, L. Martinez Alonso and M. A. Rodr\'iguez are also gratefully acknowledged.

I thank for kind hospitality the Dipartimento di Fisica, Universit\`a di Roma "La Sapienza", where part of this work has been carried out.

This research has been supported by the grant FIS2011--22566, Ministerio de Ciencia e Innovaci\'{o}n, Spain.

\end{document}